\journal{Statistics \& Probability Letters}
\def\ps@pprintTitle{%
   \let\@oddhead\@empty
   \let\@evenhead\@empty
   \def\@oddfoot{\reset@font\hfil\thepage\hfil}
   \let\@evenfoot\@oddfoot
}
\newtheorem{theorem}{Theorem}
\newtheorem{proposition}[theorem]{Proposition}
\theoremstyle{definition}
\newtheorem{definition}[theorem]{Definition}
\theoremstyle{remark}
\newcommand{\R}{\mathbb{R}}
\renewcommand{\d}{}
\newcommand{\ind}{\mathbf 1}
\newcommand*{\norm}[1]{\left\lVert#1\right\rVert}
\DeclareMathOperator{\supp}{supp}
\begin{document}

\begin{frontmatter}

\title{On mild and weak solutions for stochastic heat equations with piecewise-constant conductivity}

\author[a]{Yuliya Mishura}
\ead{myus@univ.kiev.ua}

\author[a]{Kostiantyn Ralchenko}
\ead{k.ralchenko@gmail.com}

\author[b]{Mounir Zili}
\ead{Mounir.Zili@fsm.rnu.tn}

\address[a]{Taras Shevchenko National University of Kyiv, Department of Probability Theory, Statistics and Actuarial Mathematics, Volodymyrska 64/13, 01601 Kyiv, Ukraine}
\address[b]{University of Monastir, Faculty of sciences of Monastir,
Department of Mathematics, Avenue de l'environnement, 5019 Monastir,
Tunisia}

\begin{abstract}
We investigate a stochastic partial differential equation with second order elliptic operator in divergence form, having a piecewise constant
diffusion coefficient, and driven by a space--time white noise.
We introduce a notion of weak solution of this equation and prove its equivalence to the already known notion of mild solution.
\end{abstract}

\begin{keyword}
stochastic partial differential equation\sep discontinuity of coefficients\sep fundamental solution\sep weak solution\sep mild solution
\MSC[2010] 60G15\sep 60H15\sep 35R60
\end{keyword}

\end{frontmatter}


\section{Introduction}

Since the pioneering work of \citet{W}, investigation of solutions of stochastic partial differential equations (SPDE)  has raised the interest of many researchers, especially for their numerous applications   (see \citet{Dal,Khoshnevisan} and references therein).   In fact, there are two main classes of  such solutions; classical or generalized ones. A classical solution is a  function, sufficiently smooth,   satisfying the  equation and its initial condition point-wise on the same set of probability one. Every solution that is not classical is usually called generalized.
A generalized solution extends certain properties of a classical solution without requiring existence
of partial derivatives. That is why, there exist various notions of generalized solutions, including mild and weak ones. The comparison between such notions has been dealt with only in very few cases, such as  in the case of the standard stochastic heat equation \citep[see, e.\,g.,][]{Khoshnevisan}. This can be explained by the fact that,  in general, there is no obvious reason to claim that the mild and weak solutions define the same object.
In this paper we address this question in the case of the  following equation:
\begin{equation}\label{e1:1}
\left\{
\begin{aligned}
\frac{\partial u(t, x)}{\partial t} &= {\mathcal L} u(t, x) +\dot{W} (t,x) ; \; \; t \in ( 0, T], \; x\in \R ,\\
u(0,\cdot) &:= 0,\;\;\;x\in\R.
\end{aligned}\right.
\end{equation}
 Here
$\dot{W}$  denotes a  ``derivative'' of a centered Gaussian field
$  {W} = \{  {W}(t,C); t \in [0,T ], C \in {\mathcal B}_b({\mathbb R}) \}$ with covariance
\begin{equation}
\label{e:2} {\mathbb E} ( {W}(t,C)  {W}(s,D)) = (t \wedge s) \lambda (C \cap D),
\end{equation}
where $\lambda$ is the Lebesgue measure on $\mathbb R$, and ${\mathcal L}$ is the operator defined by
\[
 {\mathcal L} = \frac{1}{2 \rho (x)} \frac{d}{dx} \left( \rho (x) A(x) \frac{d}{dx} \right) ,
\]
\begin{equation}
\label{eq:coefA}
A(x)=  a_1 {\mathbf 1}_{\{ x \le 0 \} } + a_2 {\mathbf 1}_{\{ 0 < x  \} }  \quad \hbox{and} \quad
\rho(x)= \rho_1 {\mathbf 1}_{\{ x \le 0 \} } + \rho_2 {\mathbf 1}_{\{ 0 < x  \} },
\end{equation}
$a_i, \rho_i$ ($i=1, 2$) are  strictly positive constants,
and
 $\displaystyle \frac{df}{dx}$ denotes the derivative  of $f$ in the distributional sense.
Equation  (\ref{e1:1}) represents a natural extension of the stochastic heat equation driven by space-time white noise, which has been widely studied in the literature (see, e.\,g., \citet{Dal,Khoshnevisan,W} and  references therein).
Equation (\ref{e1:1}) has been introduced in \citet{ZZ} because of its interest  in modeling  diffusion  phenomena in medium consisting of two kinds of materials,  undergoing stochastic perturbations. In  \citet{ZZ}, the authors  proved  the existence of the mild solution to (\ref{e1:1}), they presented explicit expressions of its  covariance and variance functions, and they analyzed some regularity properties of its sample paths.
Then, \citet{ZZ2} presented an estimation method   of the parameters  $a_1$ and $a_2$ appearing in (\ref{eq:coefA}) and, after that, in \citet{Zili-Zougar}, they made a deep study of the spatial quadratic variations of the mild solution process.

We make here an interesting new step in the study of SPDE (\ref{e1:1}), by introducing a notion of weak solution of this equation and then, by showing its equivalence with the already known notion of mild solution. Our proofs  require a stochastic Fubini theorem version and  some non-random  partial differential equations characteristics;   they are particularly  based on the use of the known expression of the fundamental solution related to the operator ${\mathcal L}$ and some of its characteristics.

The paper is organized as follows.
 In the first part of the next section  we introduce  and explain the notion of weak solution to SPDE (\ref{e1:1}) that we will deal within this paper. Then,  the rest of the paper is devoted to the proof of the equivalence between  the notions of mild and weak solutions for Equation (\ref{e1:1}).

\section{Weak and mild solutions}

\subsection{Weak Solution}
Let us suppose formally for the moment that equation  (\ref{e1:1}) admits  a classical solution $u\colon\Omega\times[0, \infty ) \times {\mathbb R}\to\R$ that is  a function belonging to $ C^2([0, \infty ) \times {\mathbb R})$ and satisfying equation $(\ref{e1:1})$ on a set $\Omega'\in\Omega$ of probability $1$.

Then, for every $ \varphi \in C^{\infty}_c([0,T]\times \R )$ and for every $\omega\in\Omega'$
\begin{multline*}
\underbrace{\int_{[0,T] \times \R}\frac{\partial u(s, x)}{\partial s}\;\varphi(s,x)\; \rho (x)\,dx\,ds}_{I}-\underbrace{\int_{[0,T]\times \R}{\mathcal L} u(s, x)\;\varphi(s,x)\; \rho (x)\,dx\,ds}_{J}
\\
=\int_{[0,T] \times \R} \varphi(s,x)\, \rho (x)\;W(ds,dx).
\end{multline*}
An integration by parts allows us to obtain:
\begin{align*}
 I &=     \int_{\R}\Big[ u(s,x)\,\varphi(s,x)\Big]^{T}_{0}\, \rho (x)\,dx-\int_{[0,T]\times \R}u(s,x)\;\frac{\partial \varphi(s, x)}{\partial s}\; \rho (x)\,dx\,ds \\
&=   -\int_{[0,T] \times \R}u(s,x)\;\frac{\partial \varphi(s, x)}{\partial s}\; \rho (x)\,dx\,ds.
\end{align*}
As for the integral $J$ we have:
\begin{align*}
 J &=  \frac{1}{2} \int_{[0,T]\times \R}\frac{\partial}{\partial x} \left( \rho (x) A(x)  \frac{\partial u(s,x)}{\partial x} \right)\;\varphi(s,x)\,ds\,dx \\
&= - \frac{1}{2} \int_{[0,T]\times \R} \rho(x) A(x) \frac{\partial u(s,x)}{\partial x} \; \frac{\partial}{\partial x} \left( \varphi(s,x) \right) \,ds\,dx
\end{align*}
where in the last equality we  used the definition of the derivative in the distribution sense of the locally integrable function
$  x \mapsto  \rho (x) A(x)  \frac{\partial u(s,x)}{\partial x} $.

Consequently,
\begin{align*}
J &=    - \frac{1}{2} \int_{[0,T]\times \R^\star } \rho(x) A(x) \frac{\partial u(s,x)}{\partial x} \; \frac{\partial}{\partial x} \left( \varphi(s,x) \right) \,ds\,dx \\
&=  - \frac{1}{2} \int_{[0,T]\times (0, + \infty )} \rho_2 a_2 \frac{\partial u(s,x)}{\partial x} \; \frac{\partial}{\partial x} \left( \varphi(s,x) \right) \,ds\,dx \\
&\quad   - \frac{1}{2} \int_{[0,T]\times (-\infty , 0)} \rho_1 a_1 \frac{\partial u(s,x)}{\partial x} \; \frac{\partial}{\partial x} \left( \varphi(s,x) \right) \,ds\,dx
\end{align*}

By an integration by parts we get:
\[
\int_{(0, +\infty)} \frac{\partial u(s,x)}{\partial x} \; \frac{\partial \varphi(s,x)}{\partial x}  \,dx = - u(s,0)\frac{\partial \varphi(s,0)}{\partial x} - \int_{(0, +\infty)}  u(s,x) \; \frac{\partial^2 \varphi(s,x)}{\partial ^2x}  \,dx,
\]
and
\[
\int_{(-\infty , 0)} \frac{\partial u(s,x)}{\partial x} \; \frac{\partial \varphi(s,x)}{\partial x}  \,dx =  u(s,0)\frac{\partial \varphi(s,0)}{\partial x} - \int_{(-\infty , 0)}  u(s,x) \; \frac{\partial^2 \varphi(s,x)}{\partial ^2x}  \,dx.
\]

Thus,
\begin{align*}
 J &=  \frac{1}{2} \int_{[0,T]} (\rho_2a_2 -\rho_1 a_1)u(s,0)\frac{\partial \varphi(s,0)}{\partial x} ds
   + \frac{1}{2} \int_{[0,T]\times \R^\star }  u(s,x) \rho(x) A(x) \frac{\partial^2 \varphi(s,x)}{\partial ^2x}  \,dxds \\
&=  \frac{1}{2} \int_{[0,T]} (\rho_2a_2 -\rho_1 a_1)u(s,0)\frac{\partial \varphi(s,0)}{\partial x} ds
+  \int_{[0,T]\times \R^\star }  u(s,x) {\mathcal L} ( \varphi(s,x)) \rho(x) \,dxds
\end{align*}

Therefore,
\begin{multline}
\label{weaksol}
 -\int_{[0,T] \times \R ^\star }u(s,x)\,\Bigg[ \frac{\partial \varphi(s, x)}{\partial s}+ \mathcal{L}\varphi(s,x)\Bigg]\,\rho (x)\,dx\,ds =  \int_{[0,T] \times \R} \varphi(s,x) \rho (x)\,W(ds,dx) \\
  + \frac{1}{2} \int_{0}^T (\rho (0^+) A(0^+) -\rho (0^-) A(0^-))u(s,0)\frac{\partial \varphi(s,0)}{\partial x} ds,
\end{multline}

This leads to the first definition   of the generalized solution    to  the  SPDE $(\ref{e1:1})$. It is called a weak solution.

\begin{definition}
A stochastic process $ u:= \{ u(t,x)\}_{\{t\ge 0,x\in \R \}}$ is a weak solution to the stochastic partial differential equation $(\ref{e1:1})$ if $ u\in L_{Loc}^1(\R_+\times\R, dx\,dt)$,   and  for all $ T > 0$ and   every $\varphi \in C^{\infty}_c([0,T]\times \R ),$
$u$ satisfies equation (\ref{weaksol}).
\end{definition}

In the following proposition we present the expression of the fundamental solution associated to the SPDE (\ref{e:2}). For a proof see, e.g., \citet{Zi, Zil} and \citet{CZ}.

\begin{proposition}
The fundamental solution $G$ of the partial differential equation (\ref{e1d})
 is given by
\begin{multline} \label{e:6}
G( t, x,  y)
 = \left [ \frac{1}{\sqrt{2 \pi t}}
 \left( \frac{ {\mathbf 1}_{ \{ y \le 0 \} } }{\sqrt{a_1}}
+ \frac{ {\mathbf 1}_{ \{ y > 0 \} } }{\sqrt{a_2}} \right) \times \left\{ \exp \left( - \frac{(f(x) - f(y))^2}
{2 t}\right) \right.\right. \\
 +  \left.\left.
\frac{\sqrt{a_1}+ \sqrt{a_2} (\alpha -1)}{\sqrt{a_1}-\sqrt{a_2} (\alpha -1)}\, {\rm sign} (y)\, \exp
\left( - \frac{(\mid f(x) \mid  + \mid f(y) \mid)^2}{2t}\right) \right\} \right ] {\mathbf 1}_{0 < t},
\end{multline}
where
\[
f(y)= \frac{y}{\sqrt{a_1}}{\mathbf 1}_{\{y\leq 0\}}+\frac{y}{\sqrt{a_2}} {\mathbf 1}_{\{y>0\}} \; \;
\text{and} \;  \; \alpha  = 1- \frac{\rho_1 a_1}{\rho_2a_2}.
\]
\end{proposition}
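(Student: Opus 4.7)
The plan is to verify directly that $G$ given by (\ref{e:6}) satisfies the three properties characterizing the fundamental solution of the deterministic equation $\partial_t u = \mathcal L u$: (i) $\partial_t G = \mathcal L_x G$ on $(0,T]\times \R^\star$ for each fixed $y$; (ii) the two transmission conditions at $x=0$ that are built into the divergence-form operator, namely $G(t,0^-,y)=G(t,0^+,y)$ and $\rho_1 a_1\, \partial_x G(t,0^-,y) = \rho_2 a_2\, \partial_x G(t,0^+,y)$; and (iii) $G(t,\cdot,y) \to \delta_y$ in the appropriate distributional sense as $t\to 0^+$. The key device is the rectifying change of variable $\xi = f(x)$: since $f'(x) = 1/\sqrt{a_1}$ on $\{x<0\}$ and $f'(x) = 1/\sqrt{a_2}$ on $\{x>0\}$, the operator $\mathcal L$ becomes the standard heat generator $\tfrac12 \partial_\xi^2$ on each open half-line, and the method of images then prescribes what image term to add so that the interface condition at $x=0$ is met.

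Property (i) follows because the two summands
\[
\tfrac{1}{\sqrt{2\pi t}}\exp\!\bigl(-\tfrac{(f(x)-f(y))^2}{2t}\bigr) \quad \text{and} \quad \tfrac{1}{\sqrt{2\pi t}}\exp\!\bigl(-\tfrac{(|f(x)|+|f(y)|)^2}{2t}\bigr)
\]
are standard heat kernels in the variable $f(x)$, the second being the reflected one, and composing with the piecewise-linear $f$ shows that each solves $\partial_t\,\cdot = \mathcal L\,\cdot$ on each open half-line; linearity then gives property (i).

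For property (ii), continuity at $x=0$ is immediate since $f(0^\pm) = |f(0^\pm)| = 0$, which makes both Gaussians collapse to $\exp(-f(y)^2/(2t))$. For the flux condition, differentiate $G$ in $x$, use the identity $\mathrm{sign}(y)\,|f(y)| = f(y)$, and obtain
\begin{align*}
\partial_x G(t,0^-,y) &= \tfrac{1}{\sqrt{a_1}}(1+R)\,\Phi(t,y), \\
\partial_x G(t,0^+,y) &= \tfrac{1}{\sqrt{a_2}}(1-R)\,\Phi(t,y),
\end{align*}
where $R$ denotes the reflection coefficient in (\ref{e:6}) and $\Phi(t,y)$ is a positive factor common to both sides. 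The flux continuity then reduces to $\rho_1\sqrt{a_1}(1+R) = \rho_2\sqrt{a_2}(1-R)$, i.e.\ $R = (\rho_2\sqrt{a_2} - \rho_1\sqrt{a_1})/(\rho_2\sqrt{a_2} + \rho_1\sqrt{a_1})$; using $\alpha - 1 = -\rho_1 a_1/(\rho_2 a_2)$ and multiplying numerator and denominator by $\rho_2\sqrt{a_2}/\sqrt{a_1}$ identifies this with the coefficient written in (\ref{e:6}).

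Property (iii) is obtained by noting that the main summand is, up to its $y$-prefactor, a standard Gaussian kernel in $f$-coordinates, so $\int G(t,x,y)\phi(y)\,dy \to \phi(x)$ as $t \to 0^+$ for $\phi \in C_b(\R)$ (the Jacobian $1/\sqrt{a_i}$ in the $y$-prefactor provides the proper normalization after change of variables $\eta = f(y)$), while the image summand is pointwise $O\!\bigl(\exp(-|f(x)|^2/(2t))\bigr)$ and vanishes for fixed $x \neq 0$. The main obstacle I anticipate is the book-keeping in step (ii): tracking the one-sided derivatives of $|f(x)|$ at $x=0$, the indicator $y$-prefactor $\mathbf 1_{\{y\leq 0\}}/\sqrt{a_1} + \mathbf 1_{\{y>0\}}/\sqrt{a_2}$, and the sign factor $\mathrm{sign}(y)$ carefully enough to recognize that the symmetric reflection coefficient produced by the flux condition is exactly the one written in the asymmetric form displayed in (\ref{e:6}).
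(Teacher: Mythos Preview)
The paper does not actually prove this proposition: immediately before the statement it says ``For a proof see, e.\,g., \citet{Zi,Zil} and \citet{CZ}.'' So there is no in-paper argument against which to compare your attempt.

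That said, your direct-verification strategy---rectify with the piecewise-linear map $f$, check that each Gaussian summand solves $\partial_t = \tfrac12\partial_\xi^2$ on each half-line, enforce continuity and the flux condition $\rho_1 a_1\,\partial_x G(t,0^-,y)=\rho_2 a_2\,\partial_x G(t,0^+,y)$ at the interface, and finally check $G(t,\cdot,y)\to\delta_y$---is exactly the method-of-images construction used in the cited references, and your algebra is right. In particular your reduction of the flux condition to
\[
R=\frac{\rho_2\sqrt{a_2}-\rho_1\sqrt{a_1}}{\rho_2\sqrt{a_2}+\rho_1\sqrt{a_1}}
\]
and the identification of this with $\bigl(\sqrt{a_1}+\sqrt{a_2}(\alpha-1)\bigr)\big/\bigl(\sqrt{a_1}-\sqrt{a_2}(\alpha-1)\bigr)$ are correct; indeed, the same one-sided derivative computation you sketch in step~(ii) is carried out explicitly later in the paper, inside the proof of Theorem~\ref{th:m->w}, where the identity $a_1\rho_1\,\partial_x G(t,0^-,y)-a_2\rho_2\,\partial_x G(t,0^+,y)=0$ is needed. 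Your treatment of step~(iii) is a bit terse---one should also check that the reflected term has vanishing mass as $t\downarrow0$ in a neighbourhood of $x$, not just pointwise decay---but this is routine and your outline is sound.
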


Now we introduce  the second definition of the generalized solution    to the  SPDE  $(\ref{e1:1})$ that is called a mild solution.

\begin{definition}
\label{defMild}
A stochastic process $u:= \{ u(t,x)\}_{\{ t \ge 0, x\in \R \}}$ is a mild solution to the stochastic partial differential equation  $(\ref{e1:1})$  if it can be written in the following integral form:
\[
 u(t,x) := \int_{0}^{t}\!\!\int_{\R}G(t-s,x, y)\,W(ds,\,dy),
\]
where $G$ denotes the fundamental solution to the  non random PDE
\begin{equation}\label{e1d}
 \frac{\partial u(t, x)}{\partial t} :=   {\mathcal L} u(t, x) .
\end{equation}
\end{definition}

The purpose of this paper is to investigate the equivalence between mild and weak solutions.

\subsection{Mild solution implies weak solution}

In this subsection we  prove that every mild solution is also a weak solution.

%

\begin{theorem}\label{th:m->w}
Every mild solution to equation (\ref{e1:1}) is also  its  weak solution.
\end{theorem}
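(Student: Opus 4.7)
The plan is to substitute the mild solution $u(s,x)=\int_0^s\!\int_\R G(s-r,x,y)\,W(dr,dy)$ into the left-hand side of (\ref{weaksol}) and transform it into the right-hand side by one application of stochastic Fubini followed by two deterministic integration-by-parts calculations. After stochastic Fubini, the LHS of (\ref{weaksol}) becomes
\begin{equation*}
-\int_0^T\!\int_\R \Bigl[\int_r^T\!\int_{\R^\star} G(s-r,x,y)\bigl(\partial_s\varphi(s,x)+\mathcal{L}\varphi(s,x)\bigr)\rho(x)\,dx\,ds\Bigr]\,W(dr,dy),
\end{equation*}
so everything reduces to evaluating the inner deterministic bracket $I(r,y)$ for fixed $(r,y)$.

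The first integration by parts is carried out in $s$ against $\partial_s\varphi$. The boundary at $s=T$ vanishes by the compact support of $\varphi$; the boundary at $s=r^+$ yields $-\int_{\R^\star} G(0^+,x,y)\,\rho(x)\,\varphi(r,x)\,dx=-\rho(y)\varphi(r,y)$, since $G(t,\cdot,y)\to\delta_y$ with respect to Lebesgue measure as $t\downarrow 0$; and the bulk term becomes $-\int_r^T\!\int_{\R^\star}\mathcal{L}_x G(s-r,x,y)\,\varphi(s,x)\,\rho(x)\,dx\,ds$ via the identity $\partial_s G=\mathcal{L}_x G$ for $s>r$. The second calculation consists of a pair of integrations by parts in $x$ on each half-line $(-\infty,0)$ and $(0,\infty)$ separately, where the product $\rho A$ is constant. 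Its bulk output is $+\int_r^T\!\int_{\R^\star}\mathcal{L}_x G\,\varphi\,\rho\,dx\,ds$, which cancels the bulk term produced above, so only the boundary contributions at $x=0^\pm$ survive (those at $\pm\infty$ being killed by the support of $\varphi$). Here the two transmission conditions $G(t,0^-,y)=G(t,0^+,y)$ and $\rho_1 a_1\,\partial_x G(t,0^-,y)=\rho_2 a_2\,\partial_x G(t,0^+,y)$, which are read off from the explicit formula (\ref{e:6}), play a decisive role: the first collapses the coefficient of $\partial_x\varphi(s,0)$ to $\tfrac{1}{2}(\rho_1 a_1-\rho_2 a_2)\,G(s-r,0,y)$, and the second makes the coefficient of $\varphi(s,0)$ vanish. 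Consequently
\begin{equation*}
I(r,y)=-\rho(y)\varphi(r,y)+\tfrac{1}{2}(\rho_1 a_1-\rho_2 a_2)\int_r^T G(s-r,0,y)\,\partial_x\varphi(s,0)\,ds.
\end{equation*}

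Reinserting $I(r,y)$ into the stochastic integral and flipping signs, the first summand contributes $\int_0^T\!\int_\R\rho(y)\,\varphi(r,y)\,W(dr,dy)$, which matches the noise term on the RHS of (\ref{weaksol}); the second summand, after a second application of stochastic Fubini and the recognition $u(s,0)=\int_0^s\!\int_\R G(s-r,0,y)\,W(dr,dy)$, yields exactly $\tfrac{1}{2}(\rho_2 a_2-\rho_1 a_1)\int_0^T u(s,0)\,\partial_x\varphi(s,0)\,ds$, which is the extra interface term on the RHS of (\ref{weaksol}). The main obstacle will be the technical justification of the two stochastic Fubini interchanges and of the integrations by parts: one has to verify sufficient integrability of $G$ and its spatial derivatives near $t=0$ and near the interface $x=0$, and to confirm that the transmission conditions for $G$ hold pointwise rather than merely in a distributional sense. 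Both are handled through the Gaussian-type decay afforded by the explicit formula (\ref{e:6}).
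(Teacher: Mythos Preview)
Your proposal is correct and follows essentially the same route as the paper: stochastic Fubini, then deterministic integration by parts in $s$ and in $x$ on each half-line, invoking the PDE $\partial_s G=\mathcal L_xG$, the transmission conditions at $x=0$, and the delta-initial behaviour of $G$, followed by a second stochastic Fubini to recover $u(s,0)$. The only cosmetic difference is that the paper handles the $s\downarrow r$ singularity by an explicit $\epsilon$-splitting $J_{1,\epsilon}+J_{2,\epsilon}$ rather than speaking of a boundary value at $s=r^+$, and it verifies the flux condition $\rho_1 a_1\,\partial_x G(t,0^-,y)=\rho_2 a_2\,\partial_x G(t,0^+,y)$ by a direct computation from~(\ref{e:6}) rather than asserting it.
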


\begin{proof}
Let $u$ be a mild solution and $\varphi \in C^{\infty}_c([0,T]\times \R)$.
 Introduce the operator
 \[
 H : =  -\frac{\partial }{\partial s} - \mathcal{L} .
 \]

With this notation, and for any fixed $T>0$ we have that
\begin{multline}
\label{eqWeak1}
\int_{[0,T]\times \R ^\star}u(s,x)\,H(\varphi) (s,x)\,\rho (x)\,ds\,dx
\\
= \int_{[0,T]\times \R ^\star}   \int_{[0,T]\times \R} G(s-u,x,y)\,W(du, \,dy)  H(\varphi) (s,x) \,\rho (x)\,ds\,dx.
\end{multline}

Now we apply the stochastic Fubini theorem for worthy martingale measures \citep[Th.~2.6]{W} \citep[see also][Th.~5.30]{Dal}.
 According to \citet[p.~20]{Dal}, the white noise $W$ is a worthy martingale measure with dominating measure $K$ defined by $K(dx,\,  dy, \, ds):= dx\, dy \, ds.$ In this case, we can apply the stochastic Fubini theorem for worthy martingale measures to the functions integrable with respect to the product of  measures. 
The function that will be integrated, equals
\[
g( s, u, x,y)=G(s-u,x,y)  \, H(\varphi)(s,x)\,\rho(x) {\mathbf 1}_{\R \setminus \{ 0 \}} (x),
\]
 $  (s, u, x,y) \in   [0,T]^2\times \R^2$.
 This function is measurable, integrable with respect to the product of  measures, and therefore
\begin{align*}
\MoveEqLeft \int_{  [0,T]^2\times \R^3}
\left|g( s, u, x,y)\,g( s, u, z,y)    \right|\,K(ds,\, dx,\,dz)\, du\,dy
\\
&=  \int_{ [0,T]^2\times \R^3} \left| G(s-u,x,y)  \, H(\varphi)(s,x)\,\rho(x)  {\mathbf 1}_{\R \setminus \{ 0 \} }(x) \,G(s-u,z,y)  \right.
\\
&\quad  \times \left. H(\varphi)(s,z)\,\rho(z) {\mathbf 1}_{\R \setminus \{ 0 \} }(z) \right|ds\,du\,dx\, dy\, dz
\\
&\leq  [\max(\rho_1,\rho_2 ) ]^2 \int_{[0,T]^2\times \R^3}  \left| G(s-u,x,y)  \,H(\varphi)(s,x)) {\mathbf 1}_{\R \setminus \{ 0 \} }(x) \,G(s-u,z,y) \right.
\\
&\quad  \times \left.H(\varphi)(s,z) {\mathbf 1}_{\R \setminus \{ 0 \} }(z)\right| ds\,du\,dx\, dz\, dy.
\end{align*}
Moreover,  on the one hand, from Expression (\ref{e:6}) of the fundamental solution $G$ and the fact that the terms $  \exp \left( - \frac{(f(x) - f(y))^2}{2 t}\right)$ and $  \exp \left( - \frac{(\mid f(x) \mid  + \mid f(y) \mid)^2}{2t}\right)$ are bounded by $1$,
we have $ \left| G(s-u,z,y) \right| \le \frac{C}{\sqrt{s-u}} {\mathbf 1}_{u < s}$ for every $z,  y \in {\R}$, where $C $ is a constant depending only on the coefficient $\beta , a_1$ and $a_2$. On the other hand, by Corollary 2.1 from \citet{Mish-Kosta-Zili-Zougar}, we have that
\begin{equation}\label{eq:bound-int-G}
\int_{\R } \left| G(s-u,x,y) \right|dy \le C,
\end{equation}
for every $ s,u  \in [0,T]$ and $x, y \in \R.$

All this implies the inequalities
\begin{align*}
F&:=\int_{ [0,T]^2\times \R^3 }
\left|g(s,u,x,y)\,g(s,u,z,y)    \right|\,K(ds,\, dx,\,dz)\,  du dy
\\
&\leq  C \int_{ [0,T]^2\times \R^2} \frac{1}{\sqrt{s-u}}  {\mathbf 1}_{u < s} \left|\,H(\varphi)(s,x)) {\mathbf 1}_{\R \setminus \{ 0 \} }(x)\,  \,H(\varphi)(s,z) {\mathbf 1}_{\R \setminus \{ 0 \} }(z) \right|
\\
&\quad   \times \left.   \Big( \int_{\R } \left| G(s-u,x,y) \right|dy \Big) ds\, du\,dx\, dz  \right.
\\
&\leq  C \int_{ [0,T]^2\times \R^2} \frac{1}{\sqrt{s-u}}  {\mathbf 1}_{u < s} \left|\,H(\varphi)(s,x)) {\mathbf 1}_{\R \setminus \{ 0 \} }(x)\,  \,H(\varphi)(s,z){\mathbf 1}_{\R \setminus \{ 0 \} }(z) \right| ds\, du\, dx\, dz.
\end{align*}

Now denoting by $K_2$ a compact set in $\R$ such that $[0,T]\times K_2$ contains the compact support of $\varphi$,   we have:
$$\mid H(\varphi) (s,x)  {\mathbf 1}_{\R \setminus \{ 0 \} }(x) \mid \le C , $$
for every $(s,x) \in [0,T] \times K_2$.
Therefore,
\begin{align*}
 F&\leq   C \int_{[0,T]^2 \times K_2^2} \frac{1}{\sqrt{s-u}}  {\mathbf 1}_{u < s} \left|\,H(\varphi)(s,x)) {\mathbf 1}_{\R \setminus \{ 0 \} }(x)\,  \,H(\varphi)(s,z) {\mathbf 1}_{\R \setminus \{ 0 \} }(z) \right| ds\, du\, dx\, dz
\\
&\leq C \int_{[0,T]^2 } \frac{1}{\sqrt{s-u}}  {\mathbf 1}_{u < s} \, ds\, du  <   \infty .
\end{align*}
Thus all the hypothesis of the Fubini theorem \citep[Th.~2.6]{W} are satisfied and consequently, from (\ref{eqWeak1}) we get
\begin{multline*}
\int_{[0,T]\times \R^\star }u(s,x)\,H(\varphi) (s,x)\,\rho (x)\,dx\,ds \\
=\int_{[0,T)\times \R}  \Bigg[  \int_{[0,T]\times \R^\star  } G(s-u,x,y)\,  H(\varphi) (s,x) \,\rho (x)\,ds\,dx  \Bigg] W(du ,\,dy) .
\end{multline*}

We have
\begin{multline*}
 \int_{[0,T] \times \R^\star } G(s-u,x,y)  \,\mathcal{H}(\varphi)(s,x)\,\rho (x) \,ds\,dx
=\int_{[0,T-u] \times \R^\star } G(s,x,y)  \,\mathcal{H}(\varphi)(s+u,x)\,\rho (x)\,ds\,dx
\\
= \int_{[0,\epsilon)\times\R^\star} G(s,x,y)  \,\mathcal{H}(\varphi)(u+s,x)\,\rho (x)\,ds\,dx
+ \int_{[\epsilon,T-u]\times\R^\star } G(s,x,y)  \,\mathcal{H}(\varphi)(u+s,x)\,\rho (x)\,ds\,dx
\end{multline*}
for every $0 < \epsilon < T -u$.

Let us denote
\[
J_{1,\epsilon}=\int_{[0,\epsilon)\times\R^\star} G(s,x,y)  \,\mathcal{H}(\varphi)(u+s,x)\,\rho (x)\,ds\,dx
\]
and
\[
J_{2,\epsilon}=\int_{[\epsilon,T-u]\times\R^\star } G(s,x,y)\,\mathcal{H}\varphi(u+s,x)\,\rho (x)\,ds\,dx.
\]
As for  $J_{1,\epsilon}$ we have:
\begin{align*}
 J_{1,\epsilon} &=  \int_{(0,\epsilon)\times\R^\star } G(s,x,y)  \,\mathcal{H}(\varphi)(u+s,x)\,\rho (x)\,ds\,dx \\
 &=     - \int_{(0,\epsilon)\times [0, + \infty )} G(s,x,y)  \, \Big( \frac{\partial }{\partial s}\varphi(u+s,x) \rho(x)  + \rho_2 a_2 \frac{\partial ^2 }{\partial x^2}  \varphi(u+s,x) \Big)\,\,ds\,dx  \\
&\quad  - \int_{(0,\epsilon)\times (- \infty , 0 )} G(s,x,y) \Big( \frac{\partial }{\partial s}\varphi(u+s,x) \rho(x)  + \rho_1 a_1 \frac{\partial ^2 }{\partial x^2}  \varphi(u+s,x) \Big)\, ds\,dx.
 \end{align*}
 Thus,
\[
 \vert J_{1,\epsilon} \vert
\le      C  \int_{(0,\epsilon)} \int_{\R}\Big \vert G(s,x,y)\Big\vert \,ds\,dx
\]
with
\[
C =  \max (\rho_1, \rho_2) \sup_{(t,x) \in \R^+ \times \R}\left\vert  \frac{\partial \varphi(\cdot,x)}{\partial t}\right\vert+ \max(a_1\rho_1, a_2\rho_2 ) \sup_{ (t,x) \in \R^+ \times \R } \left\vert \frac{\partial^2\varphi(t,\cdot)}{\partial x^2} \right\vert.
\]
 This with \eqref{eq:bound-int-G} leads to
\[
 \vert  J_{1,\epsilon} \vert \le   C  \epsilon ,
\]
and consequently, $ \lim_{\epsilon \rightarrow 0} J_{1,\epsilon} = 0$.

Concerning  $J_{2,\epsilon}$ we have:
\begin{align*}
 J_{2,\epsilon} &=  \int_{[\epsilon,T-u]\times\R^\star } G(s,x,y)\,\mathcal{H} (\varphi)(u+s,x)\,\rho(x)\,ds\,dx \\
&=   -\int_{[\epsilon,T-u]\times\R^\star} G(s,x,y)\;\frac{\partial \varphi(u+s,x)}{\partial s}\,\rho(x)\,ds\,dx \\
&\quad - \int_{[\epsilon,T-u]\times\R^\star } G(s,x,y)\,\mathcal{L}\varphi(u+s,x)\,\rho(x)\,ds\,dx\\
&= A_{\epsilon}+ B_{\epsilon}.
\end{align*}

Integrating by parts, we get:
\begin{align*}
A_{\epsilon}&= \int_{\R^\star }\Big[- \varphi(u+s,x)\,G(s,x,y)\,\rho(x) \Big]^{T-u}_{\epsilon}\,dx+\int_{[\epsilon,T-u]\times\R^\star } \varphi(u+s,x)\, \frac{\partial G(s,x,y)}{\partial s}\,\rho(x)ds\,dx
\\
&=  \int_{\R^\star }\varphi(\epsilon+u,x)\,G(\epsilon,x,y)\,\rho(x)\,dx -\int_{\R^\star }\varphi(T,x)\,G(T-u,x,y)\,\rho(x)\,dx
\\
&\quad +
\int_{[\epsilon,T-u]\times\R^\star } \varphi(u+s,x)\, \frac{\partial G(s,x,y)}{\partial s}\,\rho(x)\,ds\,dx
\\
&=\int_{\R^\star }\varphi(\epsilon+u,x)\,G(\epsilon,x,y)\,\rho(x)\,dx + \int_{[\epsilon,T-u]\times\R^\star } \varphi(u+s,x)\, \frac{\partial G(s,x,y)}{\partial s}\,\rho(x)\,ds\,dx,
\end{align*}
since $\varphi( T,\cdot) = 0$.

For $B_\epsilon$ we have,
\begin{multline*}
\int_{\R^\star} \frac{\partial}{\partial x} \left( \rho (x)A(x) \frac{\partial \varphi(s,x)}{\partial x} \right)\; G(s-u,x,y)\,dx
\\
=   \int_{(0, +\infty )} \rho_2 a_2 \frac{\partial^2 \varphi(s,x)}{\partial x^2}  G(s-u,x,y)\,dx  +  \int_{(- \infty , 0)} \rho_1 a_1 \frac{\partial^2 \varphi(s,x)}{\partial x^2}  G(s-u,x,y)\,dx.
\end{multline*}

Therefore
\begin{align*}
B_{\epsilon}&=  - \frac{1}{2} \int_{[\epsilon+u,T]\times (0, + \infty )} \rho_2a_2 \frac{\partial^2 \varphi(s,x)}{\partial x^2} \;  G(s-u,x,y)  \,ds\,dx
\\
&\quad  - \frac{1}{2} \int_{[\epsilon+u,T]\times (-\infty , 0)} \rho_1 a_1 \frac{\partial^2 \varphi(s,x)}{\partial x^2} \;   G(s-u,x,y)  \,ds\,dx.
\end{align*}

Integrating by parts we get
\[
\int_{-\infty}^{0}  \frac{\partial^2 \varphi(s,x)}{\partial x^2} \;  G(s-u,x,y)  dx = G(s-u,0,y) \frac{\partial \varphi(s,0)}{\partial x} - \int_{-\infty}^0  \frac{\partial \varphi(s,x)}{\partial x} \; \frac{\partial G(s-u,x,y) }{\partial x}  dx
\]
and
\[
\int_0^{+\infty}  \frac{\partial^2 \varphi(s,x)}{\partial x^2} \;  G(s-u,x,y)  dx = - G(s-u,0,y) \frac{\partial \varphi(s,0)}{\partial x} - \int_0^{+\infty}  \frac{\partial \varphi(s,x)}{\partial x} \; \frac{\partial G(s-u,x,y) }{\partial x}  dx
\]

Thus,
\begin{multline*}
 B_{\epsilon} =    \frac{1}{2} \int_{\epsilon + u}^T (\rho_2a_2 - \rho_1a_1)
G(s-u,0,y) \frac{\partial \varphi(s,0)}{\partial x} ds
\\
+  \frac{1}{2} \int_{[\epsilon+u,T]\times\R ^\star} \rho(x) A(x) \frac{\partial \varphi(s,x)}{\partial x} \; \frac{\partial G(s-u,x,y) }{\partial x}  dx ds.
\end{multline*}

On another side,  we have,
\begin{align*}
\MoveEqLeft   \int_{\R^\star }  \varphi(s,x) \; {\mathcal L} G(s-u,x,y) \rho (x)   dx
=
\frac{1}{2} \int_{\R^\star }  \varphi(s,x) \; \frac {\partial }{\partial x}  \left( A(x) \rho (x) \frac{\partial G(s-u,x,y)}{\partial x} \right)  dx
\\
&=
\frac{1}{2} \int_{(0, +\infty )}  \varphi(s,x) \; a_2 \rho_2 \frac {\partial^2 G(s-u,x,y)}{\partial x^2} dx
+ \frac{1}{2} \int_{(-\infty , 0)}  \varphi(s,x) \; a_1 \rho_1 \frac {\partial^2 G(s-u,x,y)}{\partial x^2}  dx
\\
&= - \frac{1}{2} \int_{\R^\star }  \frac{\partial \varphi(s,x)}{\partial x} \;  A(x) \rho (x) \frac{\partial G(s-u,x,y)}{\partial x}  dx
\\
&\quad + \frac{1}{2} \left(a_1\rho_1 \frac{\partial G(s-u,0^-,y)}{\partial x}- a_2\rho_2 \frac{\partial G(s-u,0^+,y)}{\partial x} \right) \varphi(s,0)
\end{align*}
In other words,
\begin{multline*}
\frac{1}{2} \int_{\R^\star }  \frac{\partial \varphi(s,x)}{\partial x} \;  A(x) \rho (x) \frac{\partial G(s-u,x,y)}{\partial x}  dx  =
 -  \int_{\R^\star }  \varphi(s,x) \; {\mathcal L} G(s-u,x,y) \rho (x)   dx  \\
+  \frac{1}{2} \left(a_1\rho_1 \frac{\partial G(s-u,0^-,y)}{\partial x}- a_2\rho_2 \frac{\partial G(s-u,0^+,y)}{\partial x} \right) \varphi(s,0)
\end{multline*}

Therefore
\begin{align*}
 B_\epsilon &=   \frac{1}{2} \int_{\epsilon + u}^T (\rho_2a_2 - \rho_1a_1)
G(s-u,0,y) \frac{\partial \varphi(s,0)}{\partial x} ds
\\
&\quad -\int_{[\epsilon+u,T]\times\R^\star }\varphi(s,x) \; {\mathcal L} G(s-u,x,y) \rho (x)   dx ds
\\
&\quad  + \int_{\epsilon + u}^T
   \frac{1}{2} \left(a_1\rho_1 \frac{\partial G(s-u,0^-,y)}{\partial x}- a_2\rho_2 \frac{\partial G(s-u,0^+,y)}{\partial x} \right) \varphi(s,0)
ds.
\end{align*}
and consequently,
\begin{align*}
 J_{2,\epsilon}&=  A_{\epsilon}+ B_{\epsilon}
 \\
&=\int_{\R}\varphi(\epsilon+u,x)\,G(\epsilon,x,y)\,\rho(x)\,dx  \\
&\quad  + \int_{(\epsilon,T-u)\times\R ^\star} \varphi(u+s,x)\, \left[\frac{\partial G(s,x,y)}{\partial s}-\mathcal{L}G(s,x,y)\right]\,\rho(x)dx\,ds\\
&\quad   + \frac{1}{2} \int_{\epsilon + u}^T (\rho(0^+)A(0^+) - \rho(0^-)A(0^-))
G(s-u,0,y) \frac{\partial \varphi(s,0)}{\partial x} ds \\
&\quad   + \int_{\epsilon + u}^T
   \frac{1}{2} \left(A(0^-)\rho (0^-) \frac{\partial G(s-u,0^-,y)}{\partial x}- A(0^+)\rho (0^+) \frac{\partial G(s-u,0^+,y)}{\partial x} \right) \varphi(s,0)
ds.
\end{align*}

Let us calculate the term $ a_1 \rho_1 \frac{\partial G( t, 0^-,  y)}{\partial x} - a_2 \rho_2 \frac{\partial G( t, 0^+,  y)}{\partial x}$.

If $x > 0$, $y > 0$ and $t > 0$,
\[
  G( t, x,  y) =   \frac{1}{\sqrt{2 \pi t}\sqrt{a_2}}
 \times \left\{ \exp \left( - \frac{(x - y)^2}
{2 a_2 t}\right)   +   \beta  \exp
\left( - \frac{(x   + y)^2}{2a_2t}\right) \right\} ,
\]
where $\beta = \frac{\sqrt{a_1}+ \sqrt{a_2} (\alpha -1)}{\sqrt{a_1}-\sqrt{a_2}(\alpha -1)}$.

Thus,
\[
  \frac{\partial G( t, x,  y)}{\partial x} =   \frac{1}{\sqrt{2 \pi t}\sqrt{a_2}}
 \left\{ - \frac{x-y}{a_2t} \exp \left( - \frac{(x - y)^2}
{2 a_2 t}\right)   -   \beta  \frac{x+y}{a_2t} \exp
\left( - \frac{(x   + y)^2}{2a_2t}\right) \right\} .
\]
Therefore,
\begin{align*}
  \frac{\partial G( t, 0^+,  y)}{\partial x} &=   \frac{1}{\sqrt{2 \pi t}\sqrt{a_2}}
 \left\{ - \frac{-y}{a_2t} \exp \left( - \frac{y^2}
{2 a_2 t}\right)   -   \beta  \frac{y}{a_2t} \exp
\left( - \frac{y^2}{2a_2t}\right) \right\}
\\
&=    \frac{-y}{\sqrt{2 \pi t}a_2 t\sqrt{a_2}}
  (\beta -1)  \exp \left( - \frac{y^2}
{2 a_2 t}\right).
\end{align*}

If $x < 0$, $y > 0$ and $t > 0$, then
\begin{align*}
   G( t, x,  y) &=   \frac{1}{\sqrt{2 \pi t}\sqrt{a_2}}
 \left\{ \exp \left( - \frac{(\sqrt{a_2}x - \sqrt{a_1}y)^2}
{2 a_1a_2 t}\right)   +   \beta  \exp
\left( - \frac{(\sqrt{a_2} x   - \sqrt{a_1} y)^2}{2a_1a_2t}\right) \right\}
\notag\\
&=     \frac{1+ \beta }{\sqrt{2 \pi t}\sqrt{a_2}} \exp \left( - \frac{(\sqrt{a_2}x - \sqrt{a_1}y)^2}
{2 a_1a_2 t}\right) .
\end{align*}

Thus,
\[
  \frac{\partial G( t, x,  y)}{\partial x} =
  \frac{1+ \beta }{\sqrt{2 \pi t}\sqrt{a_2}} \frac{- \sqrt{a_2} (\sqrt{a_2}x - \sqrt{a_1}y)}{a_1a_2t} \exp \left( - \frac{(\sqrt{a_2}x - \sqrt{a_1}y)^2}
{2 a_1a_2 t}\right) .
\]

Therefore,
$$
  \frac{\partial G( t, 0^-,  y)}{\partial x} =
  \frac{1+ \beta }{\sqrt{2 \pi t}\sqrt{a_2}} \frac{\sqrt{a_2} \sqrt{a_1}y}{a_1a_2t} \exp \left( - \frac{y^2}
{2 a_2 t}\right) .
$$

All this implies that
\begin{align*}
\MoveEqLeft  a_1 \rho_1 \frac{\partial G( t, 0^-,  y)}{\partial x} - a_2 \rho_2 \frac{\partial G( t, 0^+,  y)}{\partial x}
\\
&=  a_1\rho_1\frac{1+ \beta }{\sqrt{2 \pi t}\sqrt{a_2}} \frac{\sqrt{a_2} \sqrt{a_1}y}{a_1a_2t} \exp \left( - \frac{y^2}
{2 a_2 t}\right) - a_2 \rho_2  \frac{-y}{\sqrt{2 \pi t}a_2 t\sqrt{a_2}}
  (\beta -1)  \exp \left( - \frac{y^2}
{2 a_2 t}\right)
\\
&=   \Bigg( \frac{ \rho_1 (1 + \beta ) \sqrt{a_1} }{\sqrt{a_2}}  + \rho_2(\beta -1) \Bigg)  \times  \frac{y}{\sqrt{a_2}t \sqrt{2\pi t}} \exp \left( - \frac{y^2}
{2 a_2 t}\right)
\\
&=  \frac{2\rho_1a_1 + 2 a_2 \rho_2 (\alpha -1)}{\sqrt{a_1}\sqrt{a_2} - a_2 (\alpha -1)}  \frac{y}{\sqrt{a_2}t \sqrt{2\pi t}} \exp \left( - \frac{y^2}
{2 a_2 t}\right).
 \end{align*}
 Using the fact that  $\alpha = 1 - \frac{\rho_1a_1}{\rho_2a_2},$ we easily see that
\[
\frac{2\rho_1a_1 + 2 a_2 \rho_2 (\alpha -1)}{\sqrt{a_1}\sqrt{a_2} - a_2 (\alpha -1)} =0.
\]
 Therefore,
\[
 a_1 \rho_1 \frac{\partial G( t, 0^-,  y)}{\partial x} - a_2 \rho_2 \frac{\partial G( t, 0^+,  y)}{\partial x} = 0,
\]
for every $y > 0$.
By similar calculation, we get the same result if  $y \le 0$.
Now, since $G$ is a fundamental solution of PDE (\ref{e1d}), we have  $\frac{\partial G(s,x,y)}{\partial s}-\mathcal{L}G(s,x,y) = 0,$ for every $s \in (0,T)$ and $x \in \R \setminus \{ y \}$ and consequently,
\begin{align*}
J_{2,\epsilon}&=\int_{\R}\varphi(\epsilon+u,x)\,G(\epsilon,x,y)\,\rho(x)\,dx \\
&\quad   + \frac{1}{2} \int_{\epsilon + u}^T (\rho(0^+)A(0^+) - \rho(0^-)A(0^-))
G(s-u,0,y) \frac{\partial \varphi(s,0)}{\partial x} ds \\
&\rightarrow \int_\R \delta(x,y)\, \varphi(u,x)\,\rho(x)\,dx \\
&\quad   + \frac{1}{2} \int_{u}^T (\rho(0^+)A(0^+) - \rho(0^-)A(0^-))
G(s-u,0,y) \frac{\partial \varphi(s,0)}{\partial x} ds.
\end{align*}
Consequently,
 \begin{multline*}
\int_{(0,T)\times \R} u(s,x)\,\mathcal{H}\varphi(s,x)\,\rho(x)\,dx\,ds = \int_{(0,T)\times \R}\varphi(s,y)\,\rho(y)\,W(ds\,dy) \\
 + \int_{(0,T)\times \R} \frac{1}{2} \int_{u}^T (\rho(0^+)A(0^+) - \rho(0^-)A(0^-))
G(s-u,0,y) \frac{\partial \varphi(s,0)}{\partial x} ds W(du, dy).
\end{multline*}
Applying the Fubini  theorem \citep[Th.~2.6]{W} with $ G = [0,T]$,  $ \d\nu = ds$, $ M = W$ and
\[
g(u,s,y)=
{\mathbf 1}_{[0,s]}(u) G(s-u,0,y) \frac{\partial \varphi (s,0)}{\partial x}
\]
we get
\begin{align*}
\MoveEqLeft \int_{(0,T)\times \R^\star } u(s,x)\,\mathcal{H}\varphi(s,x)\,\rho(x)\,dx\,ds = \int_{(0,T)\times \R}\varphi(s,y)\,\rho(y)\,W(ds\,dy) \\
&\quad  + \frac{1}{2}   \int_{0}^T (\rho(0^+)A(0^+) - \rho(0^-)A(0^-)) \int_{(0,s)\times \R}
G(s-u,0,y)  W(du, dy) \frac{\partial \varphi(s,0)}{\partial x} ds\\
&=  \int_{(0,T)\times \R}\varphi(s,y)\,\rho(y)\,W(ds\,dy) + \frac{1}{2}   \int_{0}^T (\rho(0^+)A(0^+) - \rho(0^-)A(0^-)) u(s,0) \frac{\partial \varphi(s,0)}{\partial x} ds.
\end{align*}
\end{proof}

\subsection{Weak solution implies mild solution}

In this section we prove that every weak solution is a mild solution.
In view of Theorem~\ref{th:m->w}, it suffices to prove the uniqueness of a weak solution.



%
\begin{theorem}
The mild solution is a unique weak solution to the stochastic partial differential equation \eqref{e1:1}.
\end{theorem}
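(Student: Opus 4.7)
The argument is by duality. By linearity of the weak formulation, if $u_1$ and $u_2$ are two weak solutions of \eqref{e1:1}, their difference $v := u_1 - u_2$ satisfies, almost surely and for every $\varphi \in C_c^\infty([0,T]\times\R)$,
\[
-\int_{[0,T]\times\R^\star} v(s,x)\bigl[\partial_s\varphi + \mathcal{L}\varphi\bigr]\rho(x)\,dx\,ds = \tfrac{1}{2}\int_0^T (\rho_2 a_2 - \rho_1 a_1)\,v(s,0)\,\partial_x\varphi(s,0)\,ds,
\]
the stochastic integral having cancelled. The task is to deduce $v \equiv 0$. For any $\psi \in C_c^\infty((0,T)\times\R)$ I would introduce the dual test function
\[
\Phi_\psi(s,x) := \int_s^T\!\!\int_\R G(r-s,x,y)\,\psi(r,y)\,dy\,dr,
\]
which vanishes at $s=T$, is smooth on $[0,T]\times(-\infty,0)$ and on $[0,T]\times(0,+\infty)$, continuous across $x=0$, and satisfies $-\partial_s\Phi_\psi - \mathcal{L}\Phi_\psi = \psi$ pointwise off $\{x=0\}$. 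Crucially, $\Phi_\psi$ inherits from $G$ the transmission condition $\rho_1 a_1\,\partial_x\Phi_\psi(s,0^-) = \rho_2 a_2\,\partial_x\Phi_\psi(s,0^+)$, which is exactly the cancellation computed for $G$ itself in the proof of Theorem~\ref{th:m->w}.

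Because $\Phi_\psi$ has a jump in $\partial_x$ at the origin and is not a $C_c^\infty$ test function, I would approximate it by
\[
\varphi_n(s,x) := \chi_R(x)\bigl[\eta_n(x)\,\Phi_\psi(s,0) + (1 - \eta_n(x))\,\Phi_\psi(s,x)\bigr],
\]
where $\eta_n(x) := \eta(nx)$ for an even smooth cutoff $\eta$ with $\eta\equiv 1$ on $[-1,1]$ and $\eta\equiv 0$ off $[-2,2]$, and $\chi_R$ is a large spatial cutoff. Each $\varphi_n$ lies in $C_c^\infty([0,T]\times\R)$; it coincides with $\chi_R\Phi_\psi$ on $\{|x|\ge 2/n\}$, is spatially constant (equal to $\Phi_\psi(s,0)$) on $\{|x|\le 1/n\}$, and because $\eta$ is even and $1-\eta_n(0)=0$, it satisfies $\partial_x\varphi_n(s,0)\equiv 0$. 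Hence the boundary term in the equation for $v$ vanishes identically with $\varphi = \varphi_n$, leaving $\int v\bigl[\partial_s\varphi_n + \mathcal{L}\varphi_n\bigr]\rho\,dx\,ds = 0$ for every $n$.

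It then suffices to pass to the limit $n,R \to \infty$. On $\{|x|>2/n\}$ the integrand tends to $-v\psi\rho$ thanks to the PDE satisfied by $\Phi_\psi$; on $\{|x|\le 1/n\}$ one has $\mathcal{L}\varphi_n=0$ and $\partial_s\varphi_n = \partial_s\Phi_\psi(s,0)$, contributing only $O(1/n)$. The delicate piece is the transition region $\{1/n \le |x| \le 2/n\}$, where naively $\partial_{xx}\varphi_n = O(n)$ on a set of measure $O(1/n)$, producing a potentially $O(1)$ contribution. A first-order Taylor expansion of $\Phi_\psi(s,x) - \Phi_\psi(s,0)$ on each half-line, combined with the telescoping identity $x\eta_n''(x) + 2\eta_n'(x) = \tfrac{d}{dx}(x\eta_n'(x) + \eta_n(x))$, shows that the leading part of this contribution evaluates to
\[
\tfrac{1}{2}\bigl(\rho_2 a_2\,\partial_x\Phi_\psi(s,0^+) - \rho_1 a_1\,\partial_x\Phi_\psi(s,0^-)\bigr)\,v(s,0),
\]
which vanishes by the transmission condition, while all remaining terms are $O(1/n)$. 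Taking the limit yields $\int v\,\psi\,\rho\,dx\,ds = 0$ for every $\psi \in C_c^\infty((0,T)\times\R)$, whence $v \equiv 0$ almost everywhere, almost surely. The main obstacle is exactly this cancellation in the transition region: the singular contribution of $\partial_{xx}\varphi_n$ is eliminated only because $\Phi_\psi$ inherits the transmission property from $G$.
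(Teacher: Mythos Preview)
Your approach is genuinely different from the paper's and considerably more elaborate. The paper never touches the interface: it takes $\psi\in C_c^\infty((0,T)\times(0,+\infty))$, solves the constant-coefficient backward heat equation $\partial_t\varphi+\tfrac12 a_2\,\partial_{xx}\varphi=\psi$ using the ordinary Gaussian kernel, and mollifies the resulting $\varphi$ restricted to a box $[\sigma,\tau]\times[a,b]\subset(0,T)\times(0,+\infty)$. The approximating test functions are then supported entirely in the right half-plane, so $\partial_x\varphi_\varepsilon(s,0)=0$ trivially and the boundary term never appears. Dominated convergence against $v\in L^1_{loc}$ finishes the argument on each half-line separately. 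By contrast you build the dual function $\Phi_\psi$ from the full fundamental solution $G$, keep global test functions, and rely on the transmission property of $G$ to kill the interface contribution in the limit.

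The gap in your scheme is precisely the passage to the limit in the transition layer $\{1/n\le|x|\le 2/n\}$. There $\mathcal L\varphi_n$ carries a kernel of size $O(n)$ on a set of $x$-measure $O(1/n)$, and you extract the limit by writing the leading part as $v(s,0)$ times $\tfrac12\bigl(\rho_2a_2\,\partial_x\Phi_\psi(s,0^+)-\rho_1a_1\,\partial_x\Phi_\psi(s,0^-)\bigr)$. Replacing $v(s,x)$ by $v(s,0)$ under an oscillatory kernel $K_n(x)=-x\eta_n''(x)-2\eta_n'(x)$ with $|K_n|\sim n$ requires that $\int_{1/n}^{2/n}|v(s,x)-v(s,0)|\,n\,dx\to 0$; for a merely locally integrable $v$ (and $v(\cdot,0)$ only an $L^1$ trace coming from the weak formulation) this is not available without extra regularity near $x=0$. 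The ``remaining terms are $O(1/n)$'' claim has the same defect: the $O(x^2)$ and $O(x)$ remainders from the Taylor expansion of $\Phi_\psi$ yield bounded integrands, but they are still integrated against $v$ over a shrinking set, which only gives $o(1)$, not $O(1/n)$, and the leading piece is the one you cannot control.

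This is exactly the difficulty the paper sidesteps by never letting the test functions feel the interface. If you want to rescue your global construction, you would need either (i) a parabolic regularity statement for the homogeneous problem showing that $v$ is continuous (or better) in a neighbourhood of $\{x=0\}$, or (ii) to redesign the interface smoothing so that the singular part of $\partial_{xx}\varphi_n$ is itself an exact derivative that can be thrown back onto $\Phi_\psi$ rather than onto $v$. Absent one of these, the limit in the transition region is unjustified.
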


\begin{proof}
Assume that $u_1$ and $u_2$ are two weak solutions to \eqref{e1:1}.
Fix some $T > 0$.
Then \eqref{weaksol} implies that for every $\varphi\in C^{\infty}_c([0,T]\times\R)$,
\begin{multline}\label{weaksol-diff}
-\int_{[0,T] \times \R ^\star }\bigl(u_1(s,x) - u_2(s,x)\bigr)
\left [ \frac{\partial \varphi(s, x)}{\partial s}+ \mathcal{L}\varphi(s,x)\right ]\,\rho (x)\,dx\,ds\\
= \frac{1}{2} \int_{0}^T \bigl(\rho (0^+) A(0^+) -\rho (0^-) A(0^-)\bigr)\bigl(u_1(s,0) - u_2(s,0)\bigr)\frac{\partial \varphi(s,0)}{\partial x} \,ds \quad\text{a.\,s.}
\end{multline}

First, let us prove that $u_1=u_2$ in $(0,T)\times(0,+\infty)$.
Let $\psi$ be an arbitrary function from $C^\infty_c((0,T)\times(0,+\infty))$.
We may define it on the entire $[0,T]\times\R$ by putting $\psi=0$ outside $(0,T)\times(0,+\infty)$.
We claim that there exists
$\varphi \in C^{1,2}([0,T]\times\R)$
such that
\begin{equation}\label{eq:pde-bw}
\frac{\partial\varphi(t,x)}{\partial t} + \frac12 a_2\frac{\partial^2\varphi(t,x)}{\partial x^2} = \psi(t,x),
\quad t\in[0,T],\; x\in\R.
\end{equation}
Indeed, it is well known \citep[see, e.\,g.][Th.~2, p.~50]{Evans} that for any
$f\in C^{1,2}(\R_+\times\R)$ with compact support
the function
\[
v(t,x) = \int_0^t\frac{1}{2\sqrt{\pi(t-s)}}\int_{\R} e^{-\frac{|x-y|^2}{4(t-s)}} f(s,y)\,dy\,ds
\]
belongs to $C^{1,2}((0,+\infty)\times\R)$ and satisfies the heat equation
\begin{equation}\label{eq:pde-heat}
\frac{\partial v}{\partial t} - \frac{\partial^2 v}{\partial x^2} = f,
\quad t>0,\: x\in\R.
\end{equation}
Then the desired solution to \eqref{eq:pde-bw} can be constructed from the solution to \eqref{eq:pde-heat} by the time change $s=\frac12a_2(T-t)$.

Assume for definiteness that $\supp \psi\subset[\sigma,\tau]\times[a,b]\subset(0,T)\times(0,+\infty)$.
Using regularization (see \citet[Sec.~1.3]{Hormander} or \citet[Sec.~1.2]{Vlad}), we can approximate the function
$\tilde\varphi=\varphi \ind_{[\sigma,\tau]\times[a,b]}$ by functions
$\varphi_\varepsilon\in C^\infty_c([0,T]\times\R)$
in such a way that, for sufficiently small $\varepsilon$, $\supp \varphi_\varepsilon \subset (0,T)\times(0,+\infty)$,
\[
\norm{\varphi_\varepsilon}_{L_\infty} \le \norm{\tilde\varphi}_{L_\infty},
\quad
\norm{\frac{\partial\varphi_\varepsilon}{\partial t}}_{L_\infty} \le \norm{\frac{\partial\tilde\varphi}{\partial t}}_{L_\infty},
\quad
\norm{\frac{\partial^2\varphi_\varepsilon}{\partial x^2}}_{L_\infty} \le \norm{\frac{\partial^2\tilde\varphi}{\partial x^2}}_{L_\infty},
\]
and the convergences
\[
\varphi_\varepsilon \to \varphi\ind_{[\sigma,\tau]\times[a,b]},
\quad
\frac{\partial\varphi_\varepsilon}{\partial t} \to \frac{\partial\varphi}{\partial t}\ind_{[\sigma,\tau]\times[a,b]},
\quad
\frac{\partial^2\varphi_\varepsilon}{\partial x^2} \to \frac{\partial^2\varphi}{\partial x^2}\ind_{[\sigma,\tau]\times[a,b]},
\quad \text{as }\varepsilon\downarrow0,
\]
hold almost everywhere in $[0,T]\times\R$.

By inserting this $\varphi_\varepsilon$ into \eqref{weaksol-diff} we get
\[
\int_{(0,T) \times (0,+\infty) }\bigl(u_1(s,x) - u_2(s,x)\bigr)\left [\frac{\partial\varphi_\varepsilon(t,x)}{\partial t} + \frac12 a_2\frac{\partial^2\varphi_\varepsilon(t,x)}{\partial x^2}\right ]\,dx\,ds
= 0 \quad\text{a.\,s.}
\]
Note that the expression in square brackets is bounded and compactly supported, and $u_1$, $u_2$ are locally integrable.
Hence, letting $\varepsilon\downarrow0$, we obtain by the Lebesgue dominated convergence theorem that
\[
\int_{(0,T) \times (0,+\infty) }\bigl(u_1(s,x) - u_2(s,x)\bigr) \psi(s, x)\,dx\,ds
= 0 \quad\text{a.\,s.}
\]
Since $\psi\in C^\infty_c((0,T)\times(0,+\infty))$ is arbitrary and $u_1$, $u_2$ are locally integrable, we have that
$u_1-u_2=0$ almost everywhere in $(0,T) \times (0,+\infty)$ a.\,s.  \citep[see, e.\,g.,][Th.~1.2.5]{Hormander}.

Similarly, one can prove that  $u_1=u_2$ almost everywhere in $(0,T)\times(-\infty,0)$ a.\,s., whence the result follows.
\end{proof}

\section*{Acknowledgments}
YM and KR acknowledge that the present research is carried through within the frame and support of the ToppForsk project nr. 274410 of the Research Council of Norway with title STORM: Stochastics for Time-Space Risk Models.

\end{document}